\newtheorem{theorem}{\bf Theorem}[section]
\newtheorem{lemma}[theorem]{\bf Lemma}
\newtheorem{definition}[theorem]{\bf Definition}
\journal{.}
\begin{document}

\begin{frontmatter}

\title{A Method for Computing the Edge-Hyper-Wiener Index of Partial Cubes and an Algorithm for Benzenoid Systems}

\author{Niko Tratnik}
\ead{niko.tratnik1@um.si}
\address{Faculty of Natural Sciences and Mathematics, University of Maribor, Slovenia}

%
%
%
%
%

\begin{abstract}
The edge-hyper-Wiener index of a connected graph $G$ is defined as $WW_e(G) = \frac{1}{2}\sum_{\lbrace e,f\rbrace \subseteq E(G)}d(e,f) + \frac{1}{2}\sum_{\lbrace e,f\rbrace \subseteq E(G)}d(e,f)^2$. We develop a method for computing the edge-hyper-Wiener index of partial cubes, which constitute a large class of graphs with a lot of applications. It is also shown how the method can be applied to trees. Furthermore, an algorithm for computing the edge-hyper-Wiener index of benzenoid systems is obtained. Finally, the algorithm is used to correct already known closed formulas for the edge-Wiener index and the edge-hyper-Wiener index of linear polyacenes.
\end{abstract}

\begin{keyword}
edge-hyper-Wiener index \sep edge-Wiener index \sep partial cube \sep benzenoid system \sep linear polyacene
\MSC[2010] 92E10 \sep  05C12 \sep 05C85 \sep 05C90
\end{keyword}

\end{frontmatter}

\linenumbers

\section{Introduction}

The hyper-Wiener index is a distance-based graph invariant, used as a structure-descriptor for predicting physico–chemical properties of organic compounds (often those significant for pharmacology, agriculture, environment-protection etc.). It was introduced in 1993 by M. Randi\' c \cite{randic} and has been extensively studied in many papers. Randi\' c's original definition of the hyper-Wiener index was applicable just to trees and therefore, the hyper-Wiener index was later defined for all graphs \cite{klein}. The hyper-Wiener index is closely related to the well known Wiener index \cite{Wiener}, which is one of the most popular molecular descriptors. Moreover, it is also connected to the Hosoya polynomial \cite{hosoya}, since the relationship between them was proved in \cite{cash}.

In \cite{sklavzar-2000} a method for computing the hyper-Wiener index of partial cubes was developed and later it was applied to some chemical graphs \cite{cash-2002,klavzar-2000,zigert-2000}. Partial cubes constitute a large class of graphs with a lot of applications and includes, for example, many families of chemical graphs (benzenoid systems, trees, phenylenes, cyclic phenylenes, polyphenylenes).

The Wiener index and the hyper-Wiener index are based on the distances between pairs of vertices in a graph and therefore, similar concepts have been introduced for distances between pairs of edges under the names the edge-Wiener index \cite{iran-2009} and the edge-hyper-Wiener index \cite{edge-hyper}, respectively. With other words, the edge-hyper-Wiener index of a graph $G$ is just the hyper-Wiener index of the line graph $L(G)$ and a similar result holds for the edge-Wiener index. For some recent studies on the edge-hyper-Wiener index see \cite{tratnik,azari,soltani-2,soltani}.

In this paper we develop a method for computing the edge-hyper-Wiener index of partial cubes. Our main result is parallel to the result from \cite{sklavzar-2000}. However, the present method is a bit more difficult and the proof requires some additional insights. We use the cut method to reduce the problem of calculating the edge-hyper-Wiener index of partial cubes to the problem of calculating the edge-Wiener index and the contributions of pairs of $\Theta$-classes (for more information about the cut method see \cite{klavzar-arani}). As an example, we describe how the method can be used on trees. In the case of polycyclic molecules the calculation of the edge-hyper-Wiener index by the definition is not easy, especially if one is
interested in finding general expressions of homologous series. Therefore, using the main result and the results from \cite{kelenc}, we develop an algorithm for computing the edge-hyper-Wiener index of benzenoid systems, which is much more efficient than the calculation by the definition. Finally, our algorithm is used to obtain closed formulas for the edge-Wiener index and the edge-hyper-Wiener index of linear polyacenes. Although these formulas were previously developed in \cite{khormali,edge-hyper}, the results are not correct. 

\section{Preliminaries}

Unless stated otherwise, the graphs considered in this paper are connected. We define $d(x,y)$ to be the usual shortest-path distance between vertices $x, y \in V(G)$. 
\bigskip

\noindent
The \textit{Wiener index} and the \textit{edge-Wiener index} of a connected graph $G$ are defined  in the following way:
$$W(G) = \sum_{\lbrace u,v\rbrace \subseteq V(G)}d(u,v), \ \ \quad W_e(G) = \sum_{\lbrace e,f\rbrace \subseteq E(G)}d(e,f).$$

\noindent
The distance $d(e,f)$ between edges $e$ and $f$ of graph $G$ is defined as the distance between vertices $e$ and $f$ in the line graph $L(G)$. Here we follow this convention because in this way the pair $(E(G),d)$ forms a metric space. On the other hand, for edges $e = ab$ and $f = xy$ of a graph $G$ it is also legitimate to set
\begin{equation*}
\label{eq:hat-d}
\widehat{d}(e,f) = \min \lbrace d(a,x), d(a,y), d(b,x), d(b,y) \rbrace.
\end{equation*}
Replacing $d$ with $\widehat{d}$, we obtain another variant of the edge-Wiener index:
$$\widehat{W}_e(G) = \sum_{\lbrace e,f\rbrace \subseteq E(G)}\widehat{d}(e,f).$$

\noindent
Obviously, if $e$ and $f$ are two different edges of $G$, then $d(e,f)= \widehat{d}(e,f)+1$. Therefore, for any $G$ it holds
\begin{equation} \label{zveza_wie} W_e(G) = \widehat{W}_e(G) + \binom{|E(G)|}{2}.
\end{equation}

\noindent
The \textit{hyper-Wiener index} and the \textit{edge-hyper-Wiener index} of $G$ are defined as:
$$WW(G) = \frac{1}{2}\sum_{\lbrace u,v\rbrace \subseteq V(G)}d(u,v) + \frac{1}{2}\sum_{\lbrace u,v\rbrace \subseteq V(G)}d(u,v)^2,$$
$$WW_e(G) = \frac{1}{2}\sum_{\lbrace e,f\rbrace \subseteq E(G)}d(e,f) + \frac{1}{2}\sum_{\lbrace e,f\rbrace \subseteq E(G)}d(e,f)^2.$$

\noindent
It is easy to see that for any connected graph $G$ it holds $W_e(G) = W(L(G))$ and $WW_e(G) = WW(L(G))$.
\bigskip

\noindent
The {\em hypercube} $Q_n$ of dimension $n$ is defined in the following way: 
all vertices of $Q_n$ are presented as $n$-tuples $(x_1,x_2,\ldots,x_n)$ where $x_i \in \{0,1\}$ for each $1\leq i\leq n$ 
and two vertices of $Q_n$ are adjacent if the corresponding $n$-tuples differ in precisely one coordinate. Therefore, the \textit{Hamming distance} between two tuples $x$ and $y$ is the number of positions in $x$ and $y$ in which they differ.
\bigskip

\noindent
A subgraph $H$ of a graph $G$ is called an \textit{isometric subgraph} if for each $u,v \in V(H)$ it holds $d_H(u,v) = d_G(u,v)$. Any isometric subgraph of a hypercube is called a {\em partial cube}. For an edge $ab$ of a graph $G$, let $W_{ab}$ be the set of vertices of $G$ that are closer to $a$ than
to $b$. We write $\langle S \rangle$ for the subgraph of $G$ induced by $S \subseteq V(G)$. The following theorem puts forth two fundamental characterizations of partial cubes:
\begin{theorem} \cite{klavzar-book} \label{th:partial-k} For a connected graph $G$, the following statements are equivalent:
\begin{itemize}
\item [(i)] $G$ is a partial cube.
\item [(ii)] $G$ is bipartite, and $\langle W_{ab} \rangle $ and $\langle W_{ba} \rangle$ are convex subgraphs of $G$ for all $ab \in E(G)$.
\item [(iii)] $G$ is bipartite and $\Theta = \Theta^*$.
\end{itemize}
\end{theorem}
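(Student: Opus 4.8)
The plan is to prove this classical Djoković--Winkler characterization as a cycle of implications $(i)\Rightarrow(ii)\Rightarrow(iii)\Rightarrow(i)$, after first fixing the Djoković--Winkler relation used in $(iii)$: two edges $e=xy$ and $f=uv$ satisfy $e\,\Theta\,f$ exactly when $d(x,u)+d(y,v)\neq d(x,v)+d(y,u)$, and $\Theta^*$ denotes the transitive closure of $\Theta$, whose classes are the cuts of $G$. The recurring tool will be the correspondence, valid in any connected bipartite graph, between the half-spaces $W_{ab}$ and $\Theta$: since adjacent vertices lie in different parts, no vertex is equidistant from $a$ and $b$, so $V(G)=W_{ab}\cup W_{ba}$, and an edge $f$ is $\Theta$-related to $ab$ precisely when $f$ joins $W_{ab}$ to $W_{ba}$.

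First I would prove $(i)\Rightarrow(ii)$. Fix an isometric embedding of $G$ into a hypercube $Q_n$. Bipartiteness is immediate, since every subgraph of the bipartite graph $Q_n$ is bipartite. For the convexity, fix an edge $ab$ and let $i$ be the unique coordinate in which the labels of $a$ and $b$ differ. For any vertex $v$ the Hamming distances to $a$ and to $b$ differ by exactly $1$, according to the $i$-th coordinate of $v$, and isometry transports this equality to $G$; hence $W_{ab}=\{v\in V(G):v_i=a_i\}$. A geodesic of $G$ between two such vertices is also a geodesic of $Q_n$, and a $Q_n$-geodesic never flips a coordinate on which its endpoints agree, so it stays in $\{v_i=a_i\}$ and therefore in $W_{ab}$. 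Thus $\langle W_{ab}\rangle$ is convex, and symmetrically so is $\langle W_{ba}\rangle$.

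For $(ii)\Rightarrow(iii)$ I would argue that convexity of the half-spaces forces $\Theta$ to be transitive, i.e. $\Theta=\Theta^*$: if two edges were $\Theta^*$-related through a chain but not directly $\Theta$-related, one could trace the chain to produce a vertex of some $W_{ab}$ whose geodesic to another vertex of $W_{ab}$ leaves $W_{ab}$, contradicting convexity. The decisive step is $(iii)\Rightarrow(i)$: assuming $\Theta=\Theta^*$, each class $F_j$ coincides with a single cut $F_{ab}$ whose removal disconnects $G$ into exactly the two convex parts $\langle W_{ab}\rangle$ and $\langle W_{ba}\rangle$, so I can label each vertex $v$ by the tuple recording, for every class $F_1,\dots,F_n$, on which side of that cut $v$ lies. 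Adjacent vertices are separated by exactly one cut, so their labels differ in exactly one coordinate, giving a well-defined injective map $\alpha\colon V(G)\to Q_n$.

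The main obstacle is verifying that $\alpha$ is isometric. Here $d_{Q_n}(\alpha(u),\alpha(v))$ equals the number of cuts separating $u$ and $v$; any $u$–$v$ walk crosses each separating cut an odd number of times and each non-separating cut an even number of times, so $d_G(u,v)$ is at least this count. For the reverse inequality I would invoke the fact, true in every graph, that no geodesic contains two $\Theta$-related edges, which is exactly where the hypothesis $\Theta=\Theta^*$ becomes indispensable: two edges in the same class are then genuinely $\Theta$-related, so a geodesic crosses each cut at most once, hence crosses only separating cuts and each exactly once, giving length equal to the number of separating cuts. Establishing cleanly that every class really is a two-sided convex cut when $\Theta=\Theta^*$, and combining the two inequalities to conclude $d_G(u,v)=d_{Q_n}(\alpha(u),\alpha(v))$, is the technical heart of the argument.
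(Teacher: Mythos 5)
The paper offers no proof of this theorem at all: it is stated with the citation \cite{klavzar-book} as the classical Djokovi\'c--Winkler characterization, so there is no in-paper argument to compare yours against. Judged on its own, your sketch follows exactly the standard route from that literature: (i)$\Rightarrow$(ii) via the coordinates of an isometric embedding, (ii)$\Rightarrow$(iii) via convexity forcing transitivity of $\Theta$, and (iii)$\Rightarrow$(i) via Winkler's labelling of vertices by the sides of the cuts, with isometry coming from the lemma (valid in every graph) that no two edges of a geodesic are $\Theta$-related. The skeleton is sound, and your ``recurring tool'' --- in a connected bipartite graph an edge $f$ is $\Theta$-related to $ab$ precisely when $f$ joins $W_{ab}$ to $W_{ba}$ --- is correct and is indeed what drives both of the harder implications; it is also exactly the fact the paper later exploits (without proof) when it asserts that removing a $\Theta$-class of a partial cube leaves the two components $\langle W_{ab}\rangle$ and $\langle W_{ba}\rangle$.

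The one place where your outline is too thin to count as a proof is (ii)$\Rightarrow$(iii). ``Tracing the chain'' is not yet an argument; what is needed is the one-step claim that if $xy$ is $\Theta$-related to $ab$, say with $x \in W_{ab}$ and $y \in W_{ba}$, then convexity forces $W_{xy} = W_{ab}$ and $W_{yx} = W_{ba}$. The verification is short: if some $w \in W_{ab}$ satisfied $d(w,y) = d(w,x)-1$ (bipartiteness excludes ties), then a geodesic from $w$ to $y$ extended by the edge $yx$ would be a geodesic from $w$ to $x$ passing through $y \notin W_{ab}$, contradicting convexity of $\langle W_{ab}\rangle$. Once $\Theta$-related edges are known to determine the same halfspace partition, transitivity is immediate (two edges crossing the same cut are $\Theta$-related by your bipartite tool), so $\Theta = \Theta^*$ and no induction along chains is needed. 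With this lemma inserted, and with the two-sidedness of each class in (iii)$\Rightarrow$(i) handled as you indicate --- a geodesic leaving and re-entering $W_{ab}$ would cross the cut twice, producing two $\Theta$-related edges on a geodesic --- your plan compiles into a complete and correct proof of the cited theorem.
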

Is it also known that if $G$ is a partial cube and $E$ is a $\Theta$-class of $G$, then $G - E$ has exactly two connected components, namely $\langle W_{ab} \rangle $ and $\langle W_{ba} \rangle$, where $ab \in E$. For more information about partial cubes see \cite{klavzar-book}. 

\section{The edge-hyper-Wiener index of partial cubes}

In this section a method for computing the edge-hyper-Wiener index of partial cubes is developed. For this purpose, we need some auxiliary results. We start with the following definition.

\begin{definition}
Let $G$ be a partial cube and let $E_k$ be its $\Theta$-class. Furthermore, let $U$ and $U'$ be the connected components of the graph $G - E_k$. If $e,f \in E(G)$, we define

$$ \delta_k(e,f) = \left\{ \begin{matrix} 1; & e \in E(U) \, \& \, f \in E(U') \ \text{or} \ e \in E(U') \, \& \, f \in E(U),\\ 0;& \text{otherwise.}  \end{matrix} \right. $$

\end{definition}
The following lemma is crucial for our main theorem.
\begin{lemma}
\label{distance}
Let $G$ be a partial cube and let $d$ be the number of its $\Theta$-classes. If $e,f \in E(G)$, then it holds
$$\widehat{d}(e,f) = \sum_{k=1}^{d}\delta_k(e,f).$$
\end{lemma}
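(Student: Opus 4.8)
The plan is to translate both sides into the language of $\Theta$-classes and then reconcile the minimum defining $\widehat{d}$ with the sum on the right. Write $e=ab$ and $f=xy$. The basic tool I would invoke is the standard property of partial cubes that the distance between two vertices equals the number of $\Theta$-classes separating them, i.e. $d(u,v)=|\{k : u,v \text{ lie in different components of } G-E_k\}|$; this follows from the isometric embedding into a hypercube guaranteed by Theorem~\ref{th:partial-k} together with the fact that $G-E_k$ has exactly the two components $\langle W_{ab}\rangle,\langle W_{ba}\rangle$.

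The key simplifying observation I would record first is that, since $a$ and $b$ are adjacent, $d(a,b)=1$, so $a$ and $b$ are separated by exactly one $\Theta$-class, namely the class $E(e)$ containing $e$, and lie in the same component of $G-E_k$ for every other $k$; the same holds for $x,y$ and $E(f)$. Consequently, among all $\Theta$-classes only $E(e)$ and $E(f)$ are crossed by $e$ or $f$, while every other class $E_k$ contains each of $e$ and $f$ inside a single component, so its contribution to $d(u,v)$ equals $\delta_k(e,f)$ no matter which endpoints $u\in\{a,b\}$, $v\in\{x,y\}$ are chosen.

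For the lower bound I would note that whenever $\delta_k(e,f)=1$ the class $E_k$ places $a,b$ in one component and $x,y$ in the other, hence it separates all four endpoint pairs; therefore each of $d(a,x),d(a,y),d(b,x),d(b,y)$ is at least $\sum_k\delta_k(e,f)$, giving $\widehat{d}(e,f)\ge\sum_k\delta_k(e,f)$. For the matching upper bound I would exhibit one endpoint pair attaining equality: by the observation above it suffices to choose $u\in\{a,b\}$ and $v\in\{x,y\}$ lying in the same component of both $G-E(e)$ and $G-E(f)$, so that neither crossed class separates $u$ from $v$. Such a choice always exists: when $E(e)\ne E(f)$ one first picks the endpoint of $e$ on the same side of $E(e)$ as $f$ (well defined since $x,y$ share a side of $E(e)$), and then the endpoint of $f$ on the same side of $E(f)$ as that vertex; when $E(e)=E(f)$ one simply matches endpoints on the same side of this common class. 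For such a pair $d(u,v)=\sum_k\delta_k(e,f)$, which closes the argument.

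The step I expect to be the main obstacle is the upper bound, precisely because $\widehat{d}$ is a minimum of four sums, and a minimum of sums is in general strictly larger than the sum of coordinatewise minima. The entire argument hinges on the observation that adjacent endpoints differ in only one coordinate, which collapses the number of uncontrolled $\Theta$-classes to at most two and allows a single endpoint pair to neutralize both of them simultaneously; verifying that such a simultaneous choice is always available, and treating the degenerate case $E(e)=E(f)$, is the only genuinely delicate point.
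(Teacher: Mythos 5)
Your proof is correct, but it is organized quite differently from the paper's. The paper fixes the minimizing endpoint pair $a,x$ with $d(a,x)=\widehat{d}(e,f)$, takes a shortest $a,x$-path $P$, and shows directly that $|E_k\cap E(P)|=\delta_k(e,f)$ for every class $E_k$; the two ingredients are that no two edges of a shortest path are $\Theta$-related (so $|E_k\cap E(P)|\le 1$) and a bipartiteness contradiction argument showing that the remaining endpoints $b$ and $y$ lie on the correct sides of the crossed class (otherwise a shorter endpoint pair would exist, contradicting minimality). You instead invoke the global characterization of partial-cube distance as the number of separating $\Theta$-classes (the coordinate view coming from the hypercube embedding), observe that only the two classes containing $e$ and $f$ are sensitive to which endpoints are chosen, prove the lower bound $d(u,v)\ge\sum_k\delta_k(e,f)$ uniformly over all four endpoint pairs, and then construct one specific pair that is separated by no class beyond those counted by the $\delta_k$. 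Your route buys a cleaner conceptual resolution of the min-versus-sum tension (which you correctly identify as the crux) and an explicit treatment of the degenerate case where $e$ and $f$ lie in the same $\Theta$-class, which the paper handles only implicitly through its minimality contradiction; the paper's route buys self-containedness, since it only uses the local fact about shortest paths and bipartiteness, whereas your distance-counting identity is itself a theorem of Djokovi\'c--Winkler theory that ultimately rests on the same facts. Since that identity is standard and you justify it by reference to the isometric embedding and the two-component structure of $G-E_k$, there is no gap, but in a fully written version you should cite it precisely rather than assert it.
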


\begin{proof}
Let $a$ and $x$ be end-vertices of $e$ and $f$, respectively, such that $d(a,x) = \widehat{d}(e,f)$. Also, let $b$ and $y$ be the remaining end-vertices of $e$ and $f$, respectively. Furthermore, let $P$ be a shortest path between $a$ and $x$. Obviously, $|E(P)| = \widehat{d}(e,f)$. If $E_1, \ldots, E_d$ are $\Theta$-classes of $G$, than for every $k \in \lbrace 1, \ldots, d \rbrace$ we define
$$F_k = E_k \cap E(P).$$ 
Let $k \in \lbrace 1, \ldots, d \rbrace$ and let $U$ and $U'$ be the connected components of the graph $G - E_k$ such that $a \in V(U)$. Now we can show that $|F_k| = \delta_k(e,f)$. We consider the following two cases.
\begin{enumerate}
\item $F_k = \emptyset$ \\
In this case, since no edge of $P$ is in $E_k$, it follows that path $P$ is completely contained in $U$ or $U'$. Hence, $\delta_k(e,f) = 0 =|F_k|$ and the statement is true.
\item $F_k \neq \emptyset $ \\
In this case, since no two edges in a shortest path are in relation $\Theta$, it follows $|F_k|=1$.  Let $uv$ be the edge in $F_k$. Without loss of generality suppose that $V(U) = W_{uv}$ and $V(U')=W_{vu}$. Obviously, $a \in V(U)$ and $x \in V(U')$. We first show that $b \in V(U) = W_{uv}$. If $b \in W_{vu}$, then $d(b,v) < d(b,u) = d(a,u) + 1$ (the equality holds since $G$ is bipartite) and therefore, $d(b,v) \leq d(a,u)$. Hence, $\widehat{d}(e,f) < |E(P)|$, which is a contradiction. In a similar way we can show that $y \in V(U')$. Therefore, $e \in E(U)$ and $f \in E(U')$. It follows that $\delta_k(e,f)= 1 = |F_k|$.
\end{enumerate}
We have proved that $|F_k| = \delta_k(e,f)$ for any $k \in \lbrace 1, \ldots, d \rbrace$. Therefore,
$$\widehat{d}(e,f) = |E(P)| = \sum_{k=1}^{d}|E_k \cap E(P)| = \sum_{k=1}^{d}|F_k| =  \sum_{k=1}^{d}\delta_k(e,f),$$
which completes the proof.
\qed
\end{proof}

To prove the main result, we need to introduce some additional notation. If $G$ is a graph with $\Theta$-classes $E_1, \ldots, E_d$, we denote by $U_k$ and $U_k'$ the connected components of the graph $G - E_k$, where $k \in \lbrace 1, \ldots, d \rbrace$. For any $k,l \in \lbrace 1, \ldots, d \rbrace$ set

\begin{eqnarray*}
M_{kl}^{11} & = & E(U_k) \cap E(U_l), \\
M_{kl}^{10} & = & E(U_k) \cap E(U_l'), \\
M_{kl}^{01} & = & E(U_k') \cap E(U_l), \\
M_{kl}^{00} & = & E(U_k') \cap E(U_l').
\end{eqnarray*}

\noindent
Also, for $k,l \in \lbrace 1, \ldots, d \rbrace$ and $i,j \in \lbrace 0, 1 \rbrace$ we define
$$m_{kl}^{ij} = |M_{kl}^{ij}|.$$

\begin{lemma}
\label{square}
Let $G$ be a partial cube and let $d$ be the number of its $\Theta$-classes. Then
$$\sum_{e \in E(G)} \sum_{f \in E(G)} \widehat{d}(e,f)^2 = 2 \widehat{W}_e(G) + 4 \sum_{k = 1}^{d-1} \sum_{l=k+1}^d \Big( m_{kl}^{11}m_{kl}^{00} + m_{kl}^{10}m_{kl}^{01} \Big).$$
\end{lemma}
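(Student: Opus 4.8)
The plan is to build directly on Lemma~\ref{distance}, which expresses $\widehat{d}(e,f)$ as the sum $\sum_{k=1}^{d}\delta_k(e,f)$, exploiting that each $\delta_k(e,f)$ takes only the values $0$ and $1$. Squaring this identity yields
$$\widehat{d}(e,f)^2 = \Big(\sum_{k=1}^{d}\delta_k(e,f)\Big)^2 = \sum_{k=1}^{d}\delta_k(e,f)^2 + 2\sum_{k=1}^{d-1}\sum_{l=k+1}^{d}\delta_k(e,f)\,\delta_l(e,f).$$
Since every $\delta_k(e,f)$ lies in $\{0,1\}$, we have $\delta_k(e,f)^2 = \delta_k(e,f)$, so the first sum on the right collapses back to $\widehat{d}(e,f)$ by Lemma~\ref{distance}. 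Summing the resulting identity over all ordered pairs $(e,f) \in E(G) \times E(G)$ and interchanging the order of summation then reduces the problem to evaluating the linear term $\sum_{e}\sum_{f}\widehat{d}(e,f)$ and, for each fixed pair $k<l$, the cross term $\sum_{e}\sum_{f}\delta_k(e,f)\,\delta_l(e,f)$.

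First I would dispose of the linear term. Because $\widehat{d}(e,e)=0$ and each unordered pair of distinct edges is counted exactly twice among the ordered pairs, one gets $\sum_{e}\sum_{f}\widehat{d}(e,f) = 2\widehat{W}_e(G)$, which already supplies the first summand of the claimed formula.

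The heart of the argument is the evaluation of the cross term for a fixed pair $k<l$. Here I would observe that $\delta_k(e,f)=1$ holds precisely when $e$ and $f$ lie in different connected components of $G-E_k$, and analogously for $\delta_l$; hence $\delta_k(e,f)\,\delta_l(e,f)=1$ exactly when $e$ and $f$ are separated both by $E_k$ and by $E_l$. Classifying every edge according to the pair of components it occupies — that is, assigning to it the type $ij \in \{11,10,01,00\}$ recorded by its membership in $M_{kl}^{ij}$ — one checks that the four sets $M_{kl}^{11}, M_{kl}^{10}, M_{kl}^{01}, M_{kl}^{00}$ together with $E_k \cup E_l$ partition $E(G)$, that edges of $E_k \cup E_l$ contribute nothing (for them the relevant $\delta$ vanishes), and that an ordered pair of edges of types $ij$ and $i'j'$ is counted exactly when $i \neq i'$ and $j \neq j'$. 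The only such ordered type-pairs are $(11,00),(00,11),(10,01),(01,10)$, so the cross term equals $2\big(m_{kl}^{11}m_{kl}^{00} + m_{kl}^{10}m_{kl}^{01}\big)$.

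Combining the two evaluations, the outer factor $2$ multiplying each cross term produces the coefficient $4$ in front of the double sum over $k<l$, which is exactly the stated identity. I expect the main obstacle to be the bookkeeping in the cross term: correctly translating the condition $\delta_k(e,f)=\delta_l(e,f)=1$ into the statement that the two edges differ in both coordinates of their type, verifying that the classification by the sets $M_{kl}^{ij}$ is genuinely exhaustive, and keeping careful track of the several factors of $2$ that arise from passing between ordered and unordered pairs and from the identity $\delta_k(e,f)^2=\delta_k(e,f)$.
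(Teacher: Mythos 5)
Your proposal is correct and follows essentially the same route as the paper's own proof: expand $\widehat{d}(e,f)^2$ via Lemma~\ref{distance}, collapse the diagonal terms using $\delta_k(e,f)^2=\delta_k(e,f)$, and count the cross terms by classifying edges into the sets $M_{kl}^{ij}$. The only cosmetic difference is that you organize the cross terms over unordered pairs $k<l$ from the outset (and spell out the type-pair counting more explicitly), whereas the paper sums over ordered pairs $l\neq k$ and converts at the end; the bookkeeping of the factors of $2$ works out identically.
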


\begin{proof}
Using Lemma \ref{distance} we obtain
\begin{eqnarray*}
\sum_{e \in E(G)} \sum_{f \in E(G)} \widehat{d}(e,f)^2 & = & \sum_{e \in E(G)} \sum_{f \in E(G)} \Bigg( \sum_{k=1}^d \delta_k(e,f) \Bigg)^2  \\
& = & \sum_{e \in E(G)} \sum_{f \in E(G)} \sum_{k=1}^d \sum_{l=1}^d \delta_k(e,f) \delta_l(e,f)  \\
& = &    \sum_{k=1}^d \sum_{l=1}^d \Bigg( \sum_{e \in E(G)} \sum_{f \in E(G)} \delta_k(e,f) \delta_l(e,f) \Bigg)  \\
& = &    \sum_{k=1}^d \sum_{e \in E(G)} \sum_{f \in E(G)} \delta_k(e,f) + \sum_{k=1}^d \sum_{\substack{l=1 \\ l \neq k}}^d \Bigg( \sum_{e \in E(G)} \sum_{f \in E(G)} \delta_k(e,f) \delta_l(e,f) \Bigg)  \\
& = &    \sum_{e \in E(G)} \sum_{f \in E(G)} \sum_{k=1}^d \delta_k(e,f) + 2\sum_{k=1}^d \sum_{\substack{l=1 \\ l \neq k}}^d \Bigg(\frac{1}{2} \sum_{e \in E(G)} \sum_{f \in E(G)} \delta_k(e,f) \delta_l(e,f) \Bigg)  \\
& = & \sum_{e \in E(G)} \sum_{f \in E(G)} \widehat{d}(e,f) +  2\sum_{k=1}^d \sum_{\substack{l=1 \\ l \neq k}}^d \Big( m_{kl}^{11}m_{kl}^{00} + m_{kl}^{10}m_{kl}^{01} \Big),
\end{eqnarray*}
where the last equality follows from the obvious fact that $\delta_k(e,f) \delta_l(e,f) = 1$ if and only if $\delta_k(e,f) = 1$ and $ \delta_l(e,f)=1$. Hence,
$$\sum_{e \in E(G)} \sum_{f \in E(G)} \widehat{d}(e,f)^2 = 2 \widehat{W}_e(G) + 4 \sum_{k = 1}^{d-1} \sum_{l=k+1}^d \Big( m_{kl}^{11}m_{kl}^{00} + m_{kl}^{10}m_{kl}^{01} \Big)$$
and we are done.
\qed
\end{proof}

\noindent
Now everything is prepared for the main result of the paper.
\begin{theorem}
\label{glavni}
Let $G$ be a partial cube and let $d$ be the number of its $\Theta$-classes. Then
\begin{equation} \label{glavna} WW_e(G) = 2W_e(G) +  \sum_{k = 1}^{d-1} \sum_{l=k+1}^d \Big( m_{kl}^{11}m_{kl}^{00} + m_{kl}^{10}m_{kl}^{01} \Big) - {{|E(G)|}\choose{2}}.
\end{equation}
\end{theorem}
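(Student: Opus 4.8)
The plan is to begin from the definition $WW_e(G) = \frac{1}{2}\sum_{\lbrace e,f\rbrace \subseteq E(G)} d(e,f) + \frac{1}{2}\sum_{\lbrace e,f\rbrace \subseteq E(G)} d(e,f)^2$ and reduce every term to quantities already controlled by the preceding results. The first sum is by definition $W_e(G)$, so the first summand equals $\frac{1}{2}W_e(G)$ at once. All of the real work therefore sits in rewriting the sum of squares $\sum_{\lbrace e,f\rbrace} d(e,f)^2$.

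First I would invoke the elementary identity $d(e,f) = \widehat{d}(e,f) + 1$ for distinct edges, which gives $d(e,f)^2 = \widehat{d}(e,f)^2 + 2\widehat{d}(e,f) + 1$. Summing over the unordered pairs splits $\sum_{\lbrace e,f\rbrace} d(e,f)^2$ into the quadratic part $\sum_{\lbrace e,f\rbrace} \widehat{d}(e,f)^2$, the linear part $2\widehat{W}_e(G)$, and the constant $\binom{|E(G)|}{2}$. To evaluate the quadratic part I would use Lemma \ref{square}, which supplies the \emph{ordered} double sum $\sum_{e}\sum_f \widehat{d}(e,f)^2 = 2\widehat{W}_e(G) + 4\sum_{k=1}^{d-1}\sum_{l=k+1}^{d}\big(m_{kl}^{11}m_{kl}^{00} + m_{kl}^{10}m_{kl}^{01}\big)$. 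Since $\widehat{d}(e,e)=0$ the diagonal terms vanish, so the ordered sum is exactly twice the unordered one; halving it yields $\sum_{\lbrace e,f\rbrace} \widehat{d}(e,f)^2 = \widehat{W}_e(G) + 2\sum_{k=1}^{d-1}\sum_{l=k+1}^{d}\big(m_{kl}^{11}m_{kl}^{00} + m_{kl}^{10}m_{kl}^{01}\big)$.

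Collecting the three pieces gives $\sum_{\lbrace e,f\rbrace} d(e,f)^2 = 3\widehat{W}_e(G) + 2\sum_{k=1}^{d-1}\sum_{l=k+1}^{d}\big(m_{kl}^{11}m_{kl}^{00} + m_{kl}^{10}m_{kl}^{01}\big) + \binom{|E(G)|}{2}$. At this stage I would substitute the relation $\widehat{W}_e(G) = W_e(G) - \binom{|E(G)|}{2}$ from (\ref{zveza_wie}) to express everything through $W_e(G)$, which after cancelling the constants produces $\sum_{\lbrace e,f\rbrace} d(e,f)^2 = 3W_e(G) - 2\binom{|E(G)|}{2} + 2\sum_{k=1}^{d-1}\sum_{l=k+1}^{d}\big(m_{kl}^{11}m_{kl}^{00} + m_{kl}^{10}m_{kl}^{01}\big)$. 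Feeding this together with $\frac{1}{2}W_e(G)$ back into the definition of $WW_e(G)$ and simplifying the coefficients ($\frac{1}{2}W_e + \frac{3}{2}W_e = 2W_e$, and the constant collapses to a single $-\binom{|E(G)|}{2}$) yields exactly (\ref{glavna}).

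The argument is essentially bookkeeping, so I do not expect a conceptual obstacle; the one place demanding care is the passage from the ordered double sum of Lemma \ref{square} to the unordered sum over pairs. There one must verify that the diagonal contributions are genuinely zero, so that the conversion factor is precisely $2$ with no correction term, and then track the $\binom{|E(G)|}{2}$ terms consistently through the substitution from (\ref{zveza_wie}) so that the constants combine correctly.
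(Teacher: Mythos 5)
Your proposal is correct and follows essentially the same route as the paper: expand $d(e,f)^2 = \bigl(\widehat{d}(e,f)+1\bigr)^2$ over pairs of distinct edges, evaluate the quadratic term via Lemma \ref{square}, and eliminate $\widehat{W}_e(G)$ through Equation (\ref{zveza_wie}). The only difference is that you work with unordered pairs (converting the ordered double sum of Lemma \ref{square} by halving, after noting the diagonal vanishes), whereas the paper keeps ordered double sums with a factor $\frac{1}{4}$ throughout; this is pure bookkeeping and both computations agree.
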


\begin{proof}
We first notice that
\begin{eqnarray*}
\sum_{e \in E(G)} \sum_{f \in E(G)} d(e,f)^2 & = & \sum_{e \in E(G)} \sum_{\substack{f \in E(G) \\ f \neq e}} d(e,f)^2 \\
& = & \sum_{e \in E(G)} \sum_{\substack{f \in E(G) \\ f \neq e}} \Big( \widehat{d}(e,f) + 1 \Big)^2 \\
& = & \sum_{e \in E(G)} \sum_{\substack{f \in E(G) \\ f \neq e}} \widehat{d}(e,f)^2 + 2\sum_{e \in E(G)} \sum_{\substack{f \in E(G) \\ f \neq e}} \widehat{d}(e,f) + \sum_{e \in E(G)} \sum_{\substack{f \in E(G) \\ f \neq e}} 1.
\end{eqnarray*}
Using Lemma \ref{square} we thus get
$$\sum_{e \in E(G)} \sum_{f \in E(G)} d(e,f)^2 = 2\widehat{W}_e(G) + 4 \sum_{k = 1}^{d-1} \sum_{l=k+1}^d \Big( m_{kl}^{11}m_{kl}^{00} + m_{kl}^{10}m_{kl}^{01} \Big) + 4\widehat{W}_e(G) + |E(G)| \cdot (|E(G)|-1).$$
Therefore, using also Equation \ref{zveza_wie}, we obtain
\begin{eqnarray*}
WW_e(G) & =  & \frac{1}{4} \sum_{e \in E(G)} \sum_{f \in E(G)} d(e,f) + \frac{1}{4} \sum_{e \in E(G)} \sum_{f \in E(G)} d(e,f)^2 \\
& = & \frac{1}{2}W_e(G) + \frac{1}{2}\widehat{W}_e(G) +  \sum_{k = 1}^{d-1} \sum_{l=k+1}^d \Big( m_{kl}^{11}m_{kl}^{00} + m_{kl}^{10}m_{kl}^{01} \Big) \\
& + & \widehat{W}_e(G) + \frac{|E(G)| \cdot (|E(G)|-1)}{4} \\
& = & 2W_e(G) +  \sum_{k = 1}^{d-1} \sum_{l=k+1}^d \Big( m_{kl}^{11}m_{kl}^{00} + m_{kl}^{10}m_{kl}^{01} \Big) - \frac{|E(G)| \cdot (|E(G)|-1)}{2}
\end{eqnarray*}
and the proof is complete.
\qed
\end{proof}
\bigskip

As already mentioned, trees are partial cubes. Moreover, a $\Theta$-class in a tree is just a single edge. Therefore, for a tree $T$ with edges $e_1, \ldots, e_m$, Theorem \ref{glavni} reduces to 
$$WW_e(T) = 2W_e(T) + \sum_{k=1}^{m-1} \sum_{l=k+1}^m m_1(e_k,e_l)m_2(e_k,e_l) - \binom{m}{2},$$ 
where $m_1(e_k,e_l)$ and $m_2(e_k,e_l)$ are the number of edges in the two extremal connected components of the graph $T- \lbrace e_k,e_l \rbrace$, see Figure \ref{trees3}.

\begin{figure}[h!] 
\begin{center}
\includegraphics[scale=0.7]{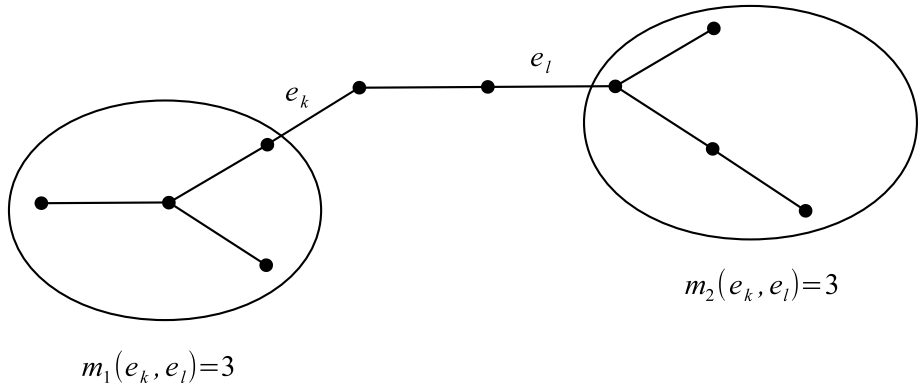}
\end{center}
\caption{\label{trees3} A tree with two extremal components with respect to $e_k$ and $e_l$.}
\end{figure}

\section{Algorithm for benzenoid systems}
\label{algoritem}

Let ${\cal H}$ be the hexagonal (graphite) lattice and let $Z$ be a cycle on it. Then a {\em benzenoid system} is induced by the vertices and edges of ${\cal H}$, lying on $Z$ and in its interior. These graphs are the molecular graphs of the benzenoid hydrocarbons, a large class of organic molecules. For more information about benzenoid systems see \cite{gucy-89}.

\noindent
An \textit{elementary cut} $C$ of a benzenoid system $G$ is a line segment that starts at
the center of a peripheral edge of a benzenoid system $G$,
goes orthogonal to it and ends at the first next peripheral
edge of $G$. By $C$ we sometimes also denote the set of edges that are intersected by the corresponding elementary cut. Elementary cuts in benzenoid systems have been
described and illustrated by numerous examples in several
earlier articles. 

\noindent
The main insight for our consideration
is that every $\Theta$-class of a benzenoid system
$G$ coincide with exactly one of its elementary cuts. Therefore, it is not difficult to check that all benzenoid systems are partial cubes.

In the case of benzenoid systems, Theorem \ref{glavni} provides a particular simple procedure for computing the edge-hyper-Wiener index:
\begin{enumerate}

\item We first compute the edge-Wiener index using the procedure described in \cite{kelenc}, which we briefly repeat: the edge set of a benzenoid system $G$ can be naturally partitioned into sets $E_1, E_2$, and $E_3$ of edges of the same direction. For $i \in \lbrace 1, 2, 3 \rbrace$, set $G_i = G - E_i$. Then the connected components of the graph $G_i$ are paths. The quotient graph $T_i$, $1\le i\le 3$, has these paths as vertices, two such paths (i.e. components of $G_i$) $P'$ and $P''$ being adjacent in $T_i$ if some edge in $E_i$ joins a vertex of $P'$ to a vertex of $P''$. It is known that $T_1$, $T_2$ and $T_3$ are trees. We next extend the quotient trees $T_1$, $T_2$, $T_3$ to weighted trees $(T_i, w_i)$, $(T_i, w'_i)$, $(T_i, w_i, w'_i)$ as follows: 
\begin{itemize}
\item for $C \in V(T_i)$, let $w_i(C)$ be the number of edges in the component $C$ of $G_i$;
\item for $E = C_1C_2 \in E(T_i)$, let $w_i'(E)$ be the number of edges between components $C_1$ and $C_2$.
\end{itemize}

Then the edge-Wiener index of a benzenoid system $G$ can be computed as
\begin{equation} \label{eW} W_e(G)= \sum_{i=1}^3 \left( \widehat{W}_e(T_i, w_i') + W_v(T_i, w_i) + W_{ve}(T_i, w_i, w_i')\right) + \binom{|E(G)|}{2}.
 \end{equation}
To  efficiently compute all the terms in Equation \ref{eW}, some additional notation is needed. If $T$ is a tree and $e \in E(T)$, then the graph $T-e$ consists of two components that will be denoted by  $C_1(e)$ and $C_2(e)$. For a vertex-edge weighted tree $(T,w,w')$ and $e \in E(T)$ set 
$$n_i(e) = \sum_{u \in V(C_i(e))}w_i(u) \qquad {\rm and}\qquad m_i(e) = \sum_{e \in E(C_i(e))}w_i'(e)\,.$$ 
Using this notation we recall the following results (see \cite{kelenc}): 
\begin{eqnarray*}
W_v(T,w)& = & \sum_{e \in E(T)}n_1(e)n_2(e), \\
\widehat{W}_e(T,w') & = & \sum_{e \in E(T)}m_1(e)m_2(e), \\
W_{ve}(T,w,w') & = & \sum_{e \in E(T)} \big(n_1(e)m_2(e) + n_2(e)m_1(e)\big).
\end{eqnarray*}

\item Let us denote the second term in Equation \ref{glavna} by $WW_e^*(G)$, i.e. $$WW_e^*(G) = \sum_{k = 1}^d \sum_{l=k+1}^d \Big( m_{kl}^{11}m_{kl}^{00} + m_{kl}^{10}m_{kl}^{01} \Big).$$ To compute this term we proceed as follows. Let $C_k$ and $C_l$ be two distinct elementary cuts ($\Theta$-classes) of a benzenoid system $G$ such that $l > k$. Then there are two different cases, since the elementary cuts can intersect or not - see Figure \ref{moznosti}. 

\begin{figure}[h!] 
\begin{center}
\includegraphics[scale=0.7]{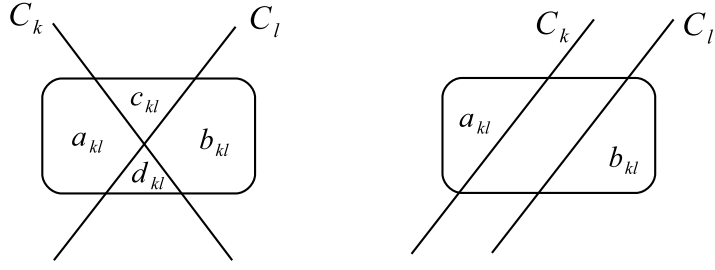}
\end{center}
\caption{\label{moznosti} Two different positions of two elementary cuts.}
\end{figure}

By $a_{kl}$, $b_{kl}$, $c_{kl}$, and $d_{kl}$ we denote the number of edges in the corresponding components of $G-C_k-C_l$. Then the contribution of the pair $C_k, C_l$ to $WW_e^*(G)$ will be denoted by $f(C_k,C_l)$ and we obtain

$$f(C_k,C_l)= 
\begin{cases}
a_{kl}b_{kl} + c_{kl}d_{kl}, & C_k \text{ and } C_l \text{ intersect} \\
a_{kl}b_{kl}, & \text{otherwise}.
\end{cases}
$$

Therefore, term $WW_e^*(G)$ can be computed as
$$WW_e^{*}(G) = \sum_{k = 1}^{d-1} \sum_{l=k+1}^d f(C_k,C_l).$$
\end{enumerate}

\noindent
Finally, we arrive to the algorithm for the computation of the edge-hyper-Wiener index of benzenoid systems. For a given benzenoid system $G$ we first compute its elementary cuts using a procedure called {\tt calculateElementaryCuts} and then the quotient trees $T_i$ using a procedure {\tt calculateQuotientTrees}. For each $T_i$ we first compute the vertex weights $w$ and the edge weights $w'$ using a procedure {\tt calculateWeights}. When the edge-Wiener index is calculated, we sum up all the contributions of pairs of elementary cuts - procedure {\tt calculateContributionsCuts}. Furthermore, Theorem \ref{glavni} is applied to obtain the final result. The algorithm thus reads as follows:
\bigskip

\begin{algorithm}[H]\label{alg:edini}
\SetKwInOut{Input}{Input}\SetKwInOut{Output}{Output}
\DontPrintSemicolon
 
\Input{Benzenoid system $G$ with $m$ edges}
\Output{$WW_e(G)$}

 \SetKwFunction{CQT}{calculateQuotientTrees}
 \SetKwFunction{CET}{calculateElementaryCuts}
 \SetKwFunction{CC}{calculateContributionsCuts}
 \SetKwFunction{CW}{calculateWeights}
 \SetKwFunction{UpdateWeights}{updateWeights}
 $C_1, \ldots, C_d \leftarrow$ \CET($G$)\;
 $(T_1,T_2,T_3) \leftarrow$ \CQT($G$)\;
 \For{$i=1$  {\rm to} $3$}{
 	$(w_i,w'_i) \leftarrow $ \CW($T_i,G$)\;
 	$X_{i,1} \leftarrow W_v(T_i,w_i)$\;
 	$X_{i,2} \leftarrow W_e(T_i,w'_i)$\;
 	$X_{i,3} \leftarrow W_{ve}(T_i,w_i,w'_i)$\;
 	$Y_i \leftarrow X_{i,1}+X_{i,2}+X_{i,3}$
 }
 $W_e(G) \leftarrow Y_1+Y_2+Y_3 + \binom{m}{2}$\;
 $WW_e^*(G) \leftarrow 0$\;
 \For{$k=1$ {\rm to} $d-1$}{
 \For{$l=k+1$ {\rm to} $d$}{
 $f_{k,l} \leftarrow$ \CC($C_k,C_l$)\;
 $WW_e^*(G) \leftarrow$ $WW_e^*(G) + f_{k,l}$\;
 }
 }
 $WW_e(G) \leftarrow 2W_e(G) + WW_e^*(G) - \binom{m}{2}$
 \caption{Edge-Hyper-Wiener Index of Benzenoid Systems}
\end{algorithm} 

\section{Closed formulas for linear polyacenes}

In this section we derive closed formulas for the edge-Wiener index and the edge-hyper-Wiener index of linear polyacenes using the procedure described in Section \ref{algoritem}. If $h \geq 1$, then \textit{linear polyacene} $L_h$ is formed of $h$ linearly connected hexagons, see Figure \ref{L4}. Recall that for $h =1,2,3,4,5$ the graph $L_h$ represents benzene, naphthalene, anthracene, naphthacene, and pentacene (see \cite{gucy-89}).

\begin{figure}[h!] 
\begin{center}
\includegraphics[scale=0.8]{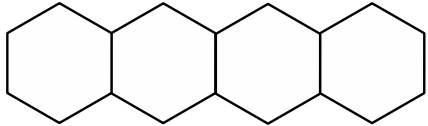}
\end{center}
\caption{\label{L4} Linear polyacene $L_4$.}
\end{figure}

\noindent
Although closed formulas for linear polyacenes were previously obtained in \cite{khormali,edge-hyper}, the results are not correct. For example, it can be computed by hand that $W_e(L_3) = 350$ and $WW_e(L_3) = 812$, but these results do not coincide with the mentioned results.

To obtain closed formulas, we first compute the edge-Wiener index. The corresponding weighted quotient trees for $L_h$ are depicted in Figure \ref{trees2}. 

\begin{figure}[h!] 
\begin{center}
\includegraphics[scale=0.7]{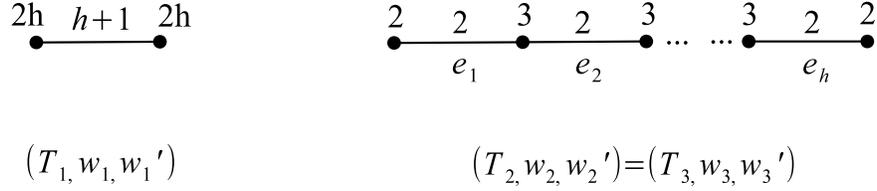}
\end{center}
\caption{\label{trees2} Weighted quotient trees for linear polyacene $L_h$.}
\end{figure}

\noindent
Obviously,

$$W_v(T_1,w_1,w_1') = 4h^2, \quad \widehat{W}_e(T_1,w_1,w_1') = 0, \quad W_{ve}(T_1,w_1,w_1') = 0.$$

\noindent
To compute the corresponding Wiener indices of the second tree, we first notice that for the edge $e_i$, $i \in \lbrace 1, \ldots, h\rbrace$ it holds:

\begin{eqnarray*}
n_1(e_i) & = & 3(i-1) + 2 = 3i-1, \\
n_2(e_i) & = & 3(h-i) + 2 = 3h-3i + 2, \\
m_1(e_i) & = & 2(i-1) = 2i-2, \\
m_2(e_i) & = & 2(h-i) = 2h-2i.
\end{eqnarray*}

\noindent
Therefore, after an elementary calculation we obtain
\begin{eqnarray*}
W_v(T_2,w_2,w_2') = W_v(T_3,w_3,w_3') & = & \sum_{i=1}^{h}(3i-1)(3h-3i+2) = \frac{1}{2}h(3h^2+3h+2), \\
\widehat{W}_e(T_2,w_2,w_2') = \widehat{W}_e(T_3,w_3,w_3') & = & \sum_{i=1}^{h}(2i-2)(2h-2i) = \frac{2}{3}h(h^2-3h+2), \\
W_{ve}(T_2,w_2,w_2') = W_{ve}(T_3,w_3,w_3') & = & \sum_{i=1}^{h}\Big((3i-1)(2h-2i) + (2i-2)(3h-3i+2) \Big) \\
& = & 2(h-1)h^2.
\end{eqnarray*}

\noindent
Also, one can easily see the the number of edges in $L_h$ is
\begin{equation}
\label{pov}
|E(L_h)| = 5h+1.
\end{equation}

\noindent
Hence, Equation \ref{eW} imply that for any $h \geq 1$ it holds

\begin{equation}
\label{edge-wi}
W_e(L_h) = \frac{1}{6}h(50h^2 + 69h + 43).
\end{equation}

Next, we have to compute $WW_e^*(L_h)$. Let $B$, $C_1, C_2, \ldots, C_h$, $D_1, D_2, \ldots, D_h$ be the elementary cuts of $L_h$, where $h \geq 1$. See Figure \ref{cuts}. 

\begin{figure}[h!] 
\begin{center}
\includegraphics[scale=0.8]{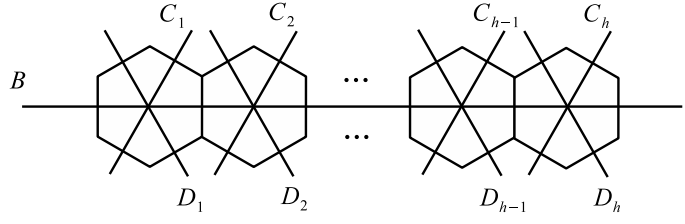}
\end{center}
\caption{\label{cuts} Elementary cuts of $L_h$.}
\end{figure}

\noindent
To obtain the final result, we need to calculate the contributions $f(\cdot,\cdot)$ of all the pairs of elementary cuts. Therefore, the number of edges in specific components (parts) of a graph is shown in Table \ref{tabela}.

\begin{table}[h!]
\centering
\begin{tabular}{|l||c|c|c|c|} 
\hline
Pair of elementary cuts & Part $a$ & Part $b$ & Part $c$ & Part $d$  \\ \hline \hline
$B,C_k$ ($k=1,\ldots,h$) & $2k-1$ & $2h-2k+1$ & $2h-2k$ & $2k-2$  \\ \hline
$B,D_k$ ($k=1,\ldots,h$) & $2k-2$ & $2h-2k$ & $2h-2k+1$ & $2k-1$  \\ \hline
$C_k,C_l$ ($k<l$) & $5k-3$ & $5h-5l+2$ & /& /  \\ \hline
$D_k,D_l$ ($k<l$) & $5k-3$ & $5h-5l+2$ & / & /  \\ \hline
$C_k,D_l$ ($k < l$) & $5k-3$ & $5h-5l+2$ & / & /  \\ \hline
$C_k,D_l$ ($k > l$) & $5l-3$ & $5h-5k+2$ & / & / \\ \hline
$C_k,D_k$ ($k=1,\ldots,h$) & $5k-4$ & $5h-5k+1$ & 0 & 0  \\ \hline
\end{tabular}
\caption{ \label{tabela} Number of edges in the connected components.}
\end{table}

\noindent
After some elementary calculations we obtain
\begin{eqnarray*}
\sum_{k=1}^h\Big( (2k-1)(2h-2k+1)+(2h-2k)(2k-2) \Big) & = & \frac{1}{3}h(4h^2-6h+5), \\
\sum_{k=1}^{h-1} \sum_{l=k+1}^h (5k-3)(5h-5l+2) & = & \frac{1}{24} h(25h^3-70h^2+83h-38), \\
\sum_{k=1}^h (5k-4)(5h-5k+1) & = & \frac{1}{6}h(25h^2-45h+26).
\end{eqnarray*}

\noindent
By summing up the contributions of all pairs of elementary cuts from Table \ref{tabela} we then calculate
\begin{equation}
\label{contri}
WW_e^*(L_h) = \frac{1}{6}h(25h^3 -29h^2 + 14h + 8).
\end{equation}

Finally, using Theorem \ref{glavni}, Equation \ref{edge-wi}, Equation \ref{contri}, and Equation \ref{pov} we deduce that for any $h \geq 1$ it holds
$$WW_e(L_h) = \frac{1}{6}h(25h^3 + 71h^2+77h+79).$$

\bibliography{bib_edge_hyper}

\begin{thebibliography}{10}
\expandafter\ifx\csname url\endcsname\relax
  \def\url#1{\texttt{#1}}\fi
\expandafter\ifx\csname urlprefix\endcsname\relax\def\urlprefix{URL }\fi
\expandafter\ifx\csname href\endcsname\relax
  \def\href#1#2{#2} \def\path#1{#1}\fi

\bibitem{randic}
M.~Randi\'{c}, Novel molecular descriptor for structure-property studies, Chem.
  Phys. Lett. 211 (1993) 478--483.

\bibitem{klein}
D.~J. ‎Klein‎, I.~‎Lukovits, I.~‎Gutman, ‎{O}n the definition of
  hyper-{W}iener index for cycle-containing structures, J‎. ‎Chem‎.
  ‎Inf‎. ‎Comput‎. ‎Phys‎. ‎Chem‎. ‎Sci. 35 (1995) 50--52.

\bibitem{Wiener}
H.~Wiener, Structural determination of paraffin boiling points, J. Amer. Chem.
  Soc. 69 (1947) 17--20.

\bibitem{hosoya}
H.~Hosoya, On some counting polynomials in chemistry, Discrete Appl. Math. 19
  (1988) 239--257.

\bibitem{cash}
G.~G. Cash, Relationship between the hosoya polynomial and the hyper-wiener
  index, Appl. Math. Lett. 15 (2002) 893--895.

\bibitem{sklavzar-2000}
S.~Klav\v{z}ar, Applications of isometric embeddings to chemical graphs, DIMACS
  Ser. Discrete Math. Theoret. Comput. Sci. 51 (2000) 249--259.

\bibitem{cash-2002}
G.~Cash, S.~Klav\v{z}ar, M.~Petkov\v{s}ek, Three methods for calculation of the
  hyper-{W}iener index of molecular graphs, J. Chem. Inf. Comput. Sci. 42
  (2002) 571--576.

\bibitem{klavzar-2000}
S.~Klav\v{z}ar, P.~\v{Z}igert, I.~Gutman, An algorithm for the calculation of
  the hyper-{W}iener index of benzenoid hydrocarbons, Comput. Chem. 24 (2000)
  229--233.

\bibitem{zigert-2000}
P.~\v{Z}igert, S.~Klav\v{z}ar, I.~Gutman, Calculating the hyper-{W}iener index
  of benzenoid hydrocarbons, ACH - Models Chem. 137 (2000) 83--94.

\bibitem{iran-2009}
A.~Iranmanesh, I.~Gutman, O.~Khormali, A.~Mahmiani, The edge versions of
  {W}iener index, MATCH Commun. Math. Comput. Chem. 61 (2009) 663--672.

\bibitem{edge-hyper}
A.~Iranmanesh, A.~S. Kafrani, O.~Khormali, A new version of hyper-{W}iener
  index, MATCH Commun. Math. Comput. Chem. 65 (2011) 113--122.

\bibitem{tratnik}
N.~Tratnik, P.~\v{Z}igert Pleter\v{s}ek, Relationship between the {H}osoya
  polynomial and the edge-{H}osoya polynomial of trees, preprint.

\bibitem{azari}
M.~Azari, A.~Iranmanesh, Edge-{W}iener type invariants of splices and links of
  graphs, U.P.B. Sci. Bull., Series A 77 (2015) 143--154.

\bibitem{soltani-2}
A.~Soltani, A.~Iranmanesh, The hyper edge-{W}iener index of corona product of
  graphs, Trans. Comb. 4 (2015) 1--9.

\bibitem{soltani}
A.~Soltani, A.~Iranmanesh, Z.~A. Majid, The multiplicative version of the edge
  {W}iener index, MATCH Commun. Math. Comput. Chem. 71 (2014) 407--416.

\bibitem{klavzar-arani}
S.~Klav\v{z}ar, M.~J. Nadjafi-Arani, Cut method: update on recent developments
  and equivalence of independent approaches, Curr. Org. Chem. 19 (2015)
  348--358.

\bibitem{kelenc}
A.~Kelenc, S.~Klav\v{z}ar, N.~Tratnik, The edge-{W}iener index of benzenoid
  systems in linear time, MATCH Commun. Math. Comput. Chem. 74 (2015) 521--532.

\bibitem{khormali}
O.~Khormali, A.~Iranmanesh, I.~Gutman, A.~Ahmadi, Generalized {S}chultz index
  and its edge versions, MATCH Commun. Math. Comput. Chem. 64 (2010) 783--798.

\bibitem{klavzar-book}
R.~Hammack, W.~Imrich, S.~Klav\v{z}ar, Handbook of Product Graphs, Second
  Edition, RC Press, Taylor \& Francis Group, Boca Raton, 2011.

\bibitem{gucy-89}
I.~Gutman, S.~J. Cyvin, Introduction to the Theory of Benzenoid Hydrocarbons,
  Springer-Verlag, Berlin, 1989.

\end{thebibliography}

\end{document}